\theoremstyle{plain}
\newtheorem{thm}{Theorem}[section]
\newtheorem{cor}[thm]{Corollary}
\newtheorem{lem}[thm]{Lemma}
\newtheorem{rem}[thm]{Remark}
\newtheorem{ques}[thm]{Question}
\newtheorem{conj}[thm]{Conjecture}
\renewcommand{\phi}{\varphi}
\newtheorem*{acknowledgement}{Acknowledgement}
\begin{document}

\title[On products of disjoint blocks]{On products of disjoint blocks of arithmetic progressions and related equations}
\author{Sz. Tengely}
\address{Institute of Mathematics\newline
 \indent University of Derecen\newline
 \indent P.O.Box 12\newline
 \indent 4010 Debrecen\newline
 \indent Hungary}
\email{tengely@science.unideb.hu}
\author{M. Ulas}
\address{Jagiellonian University\newline
\indent Faculty of Mathematics and Computer Science \newline
\indent Institute of Mathematics\newline
 \indent Łojasiewicza 6\newline
 \indent 30-348 Kraków\newline
 \indent Poland}
\email{Maciej.Ulas@im.uj.edu.pl}
%\thanks{Research supported in part by the OTKA grants NK104208 and K100339}
\keywords{Diophantine equations}
%\footnote{2000 {\it Mathematics Subject Classification.} Primary 11D41; Secondary 11D25}
\subjclass[2010]{Primary 11D61; Secondary 11Y50}

\thanks{Research supported in part by the OTKA grants K100339 and NK101680.}

\begin{abstract}
In this paper we deal with Diophantine equations involving products of consecutive integers,
inspired by a question of Erdős and Graham.

% In this paper we deal with some problems posed by Zhang and Cai \cite{ZC}
% related to the Diophantine equations
% \begin{eqnarray*}
% (x-b)x(x+b)(y-b)y(y+b)&=&z^2,\\
% (x-b)x(x+b)(y-b)y(y+b)&=&(z-b)z(z+b).
% \end{eqnarray*}
% We also present some results concerning related Diophantine equations involving products of consecutive integers.
\end{abstract}

\maketitle

\section{introduction}
Consider the polynomial
$$
f(x,k,d)=x(x+d)\cdots(x+(k-1)d).
$$
Erd\H{o}s \cite{er} and independently Rigge \cite{r} proved that $f(x,k,1)$ is never a perfect square.
A celebrated result of Erd\H{o}s and Selfridge \cite{ES} states
that $f(x,k,1)$ is never a perfect power if $x\geq 1$ and $k\geq 2.$ The literature of this type of Diophantine equations is very rich. First we mention some results related to the Diophantine equation
\begin{equation}\label{1}
f(x,k,d)=y^2.
\end{equation}
Euler proved (see \cite{Dickson} pp. 440 and 635) that a product of four terms in arithmetic progression is never a square.
Obl\'ath \cite{Oblath} obtained a similar statement for $k=5.$
Saradha and Shorey \cite{ss2} showed that \eqref{1} has no solutions with $k\geq 4$, provided that $d$ is a power of a prime number.
Laishram and Shorey \cite{omd4} extended this result to the case where either $d\leq 10^{10}$, or $d$ has at most six prime divisors.
Bennett, Bruin, Gy\H{o}ry and Hajdu \cite{BBGyH} solved \eqref{1} with $6\leq k\leq 11.$
Hirata-Kohno, Laishram, Shorey and Tijdeman \cite{HLST} completely solved \eqref{1} with $3\leq k<110$ and $x,d,k\geq 2, \gcd(x,d)=1.$

Many authors have studied the more general equation
\begin{equation}\label{2}
f(x,k,d)=by^l,
\end{equation}
where $b>0, l\geq 3$ and the greatest prime factor of $b$ does not exceed $k.$ Saradha \cite{Saradha_AA82} proved that \eqref{2}
has no solution with $1\leq d\leq 6,k\geq 4,\gcd(x,d)=1.$ Gy\H{o}ry \cite{Gyory_AA83} studied the cases $k=2,3,$ he determined all integral solutions.
Gy\H{o}ry, Hajdu and Saradha \cite{GyHS} showed that the product of four or five consecutive terms of an arithmetical progression of integers cannot be a perfect power, provided that the initial term is coprime to the difference.
Hajdu, Tengely and Tijdeman \cite{HTT} proved that the product of $k$ coprime integers in arithmetic progression cannot be a cube when $2 < k < 39.$
Gy\H{o}ry, Hajdu and Pint\'er \cite{GyHP2009} proved that for any positive integers $x, d$ and $k$ with $\gcd(x,d)=1$ and $3<k<35,$ the product $x(x+d)\cdots(x+(k-1)d)$ cannot be a perfect power.

Erd\H{o}s and Graham \cite{EG} asked if the Diophantine equation
$$
\prod_{i=1}^r f(x_i,k_i,1)=y^2
$$
has, for fixed $r\geq 1$ and $\{k_1,k_2,\ldots,k_r\}$ with $k_i\geq 4$ for $i=1,2,\ldots,r,$
at most finitely many solutions in positive integers $(x_1,x_2,\ldots,x_r,y)$
with  $x_i + k_i\leq x_{i+1}$ for $1\leq i\leq r-1.$
Ska{\l}ba \cite{Skalba} obtained a bound for the smallest solution and estimated the number of solutions below a given bound.
Ulas \cite{Ulas2005} answered the above question of Erd\H{o}s and Graham in the negative when either $r=4,(k_1,k_2,k_3,k_4)=(4,4,4,4)$ or $r\geq 6$ and $k_i=4, 1\leq i\leq r.$
Bauer and Bennett \cite{BB-EG} extended this result to the cases $r=3$ and $r=5.$
Bennett and Van Luijk \cite{BvL} constructed an infinite family of $r\geq 5$ non-overlapping blocks of five consecutive integers such that their product is always a perfect square.
Luca and Walsh \cite{LW} studied the case $(r,k_i)=(2,4).$

\section{Erdős-Graham type Diophantine problems}
In this section we present some related Diophantine equations involving products of consecutive integers. Let us recall that Bauer and Bennett \cite{BB-EG} proved that for each positive integer $j$ and a $j$ tuple $(k_{1},\ldots,k_{j})$ the Diophantine equation
\begin{equation}\label{fulleq}
y^2=x(x+1)\prod_{i=1}^{j}\prod_{l=0}^{k_{i}-1}(x_{i}+l)
\end{equation}
has infinitely many solutions in positive integers $x,x_{1},\ldots, x_{j}$. However, the proof they presented produces solutions which grow exponentially.
In the light of this result one can ask whether in some cases we can find solutions in polynomials with integer coefficients. In this direction we offer the following:

\begin{thm}\label{23squarethm}
The Diophantine equations
\begin{eqnarray}
x(x+1)y(y+1)(y+2)&=&z^2,\label{eq23square}\\
x(x+1)y(y+1)(y+2)(y+3)&=&z^2 \label{eq24square}
\end{eqnarray}
have infinitely many solutions in the ring $\mathbb{Z}[t]$. Moreover, the Diophantine equation
 \begin{equation}\label{eq25square}
x(x+1)y(y+1)(y+2)(y+3)(y+4)=z^2
\end{equation}
has at least two solutions in the ring $\mathbb{Z}[t]$.
\end{thm}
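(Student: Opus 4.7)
The unifying device is the algebraic identity
$$a(a+1)(a+2)(a+3) = (a^2+3a+1)^2 - 1,$$
together with the observation that, whenever the consecutive-integer product on the left-hand side of the equation can be written as $C(M^2-1)$ with $C$ a perfect square in $\mathbb{Z}[t]$, the equation collapses (after absorbing $C$ into $z$) to $x(x+1)(M^2-1) = w^2$, which admits the explicit polynomial point $x = -M^2$, $w = M(M^2-1)$; this is immediate from $(-M^2)(1-M^2)(M^2-1) = M^2(M^2-1)^2$. The same mechanism drives all three parts of the theorem.

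For equation \eqref{eq24square} I would set $y=t$: then $y(y+1)(y+2)(y+3) = M^2-1$ with $M = t^2+3t+1$, and the recipe above yields
$$(x,y,z) = \bigl(-(t^2+3t+1)^2,\ t,\ (t^2+3t+1)\cdot t(t+1)(t+2)(t+3)\bigr)\in\mathbb{Z}[t]^3.$$
Composing with arbitrary polynomial substitutions $t \mapsto f(t) \in \mathbb{Z}[t]$ then produces infinitely many further elements of $\mathbb{Z}[t]^3$ satisfying the equation.

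For equation \eqref{eq25square} I would choose $y$ so that one of the five factors $y, y+1, \ldots, y+4$ is the perfect square $t^2$, leaving a product of four consecutive integers to which the key identity can be applied. The two symmetric choices $y = t^2$ and $y = t^2-4$ give
$$(t^2+1)(t^2+2)(t^2+3)(t^2+4) = (t^4+5t^2+5)^2-1$$
and
$$(t^2-4)(t^2-3)(t^2-2)(t^2-1) = (t^4-5t^2+5)^2-1,$$
respectively, each handled by the polynomial point above; the two resulting triples are not polynomial reparametrizations of one another, settling the ``at least two'' claim. For equation \eqref{eq23square} the route is even simpler: taking $y=t^2-2$ and $x=t^2-2$ already makes the left-hand side equal to $t^2(t^2-1)^2(t^2-2)^2$, and infinitely many triples again follow by polynomial substitutions in $t$.

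The only substantive step is spotting the identity $a(a+1)(a+2)(a+3) = (a^2+3a+1)^2-1$ and the accompanying polynomial point $x = -M^2$ on the conic $x(x+1)(M^2-1) = w^2$. Once these are in hand, the rest of each case is routine substitution and expansion.
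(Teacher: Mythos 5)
Your algebra is correct throughout, and your two solutions of \eqref{eq25square} (with $y=t^2$ and $y=t^2-4$, which are not reparametrizations of one another) do establish the ``at least two'' claim; they are close in spirit to the paper's own pair, which likewise takes $y=(2t+1)^2$ and $y=(2t+1)^2-4$. One cosmetic repair is needed: your $x=-M^2$ takes negative values, which clashes with the stated motivation of producing positive integer solutions of polynomial growth; replace it by $x=M^2-1$, which gives the same value of $x(x+1)$ since that product is invariant under $x\mapsto -x-1$. With that change this part of your argument stands.

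The genuine gap is in the two ``infinitely many'' claims. Deducing infinitude from the substitutions $t\mapsto f(t)$ proves only a vacuous version of the theorem: under that reading the degenerate triple $x=0$, $y$ arbitrary, $z=0$ already yields infinitely many solutions of every equation, and a single nonconstant solution of \eqref{eq25square} would upgrade ``at least two'' to ``infinitely many'', which the authors conspicuously do not claim. What the theorem means --- and what the paper proves --- is that there is an infinite family of essentially distinct solutions: for a \emph{fixed} $y$ (namely $y=4t^2+1$ for \eqref{eq23square} and $y=t$ for \eqref{eq24square}) the equation is rewritten as the Pell-type equation $Z^2-f_{k}(y)X^2=-f_{k}(y)$ with $Z=2z$, $X=2x+1$; one particular solution $(Z',X')$ is exhibited together with a nontrivial polynomial solution of $Z^2-f_{k}(y)X^2=1$ (for instance $Z=t^4+3t^2+1$, $X=t$ when $k=3$ and $y=t^2+1$), and the standard Pell recursion then produces pairs $(x_n,z_n)$ of strictly increasing degree sharing the same $y$. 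Your construction yields exactly one essentially new solution per equation, so the infinitude is not established; the missing idea is precisely this polynomial Pell machinery. Separately, your proposed solution of \eqref{eq23square} takes $x=y=t^2-2$, so the two blocks coincide; the equation then collapses to the triviality that $x+2$ be a square, and this is not a solution of the disjoint-block problem the paper is addressing.
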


The next results deal with the question of whether the product of disjoint blocks of consecutive integers can be a product of two consecutive integers.
We thus consider the Diophantine equation
\begin{equation}\label{EGtype}
\prod_{i=1}^r f(x_i,k_i,d)=y(y+d)
\end{equation}

This question concerning the solvability in integers of the equation (\ref{EGtype}) can be seen as a variation on Erd\H{o}s and Graham question. We have the following:

\begin{thm}\label{EGvar}
The Diophantine equations
\begin{eqnarray}
x(x+1)y(y+1)(y+2)&=&z(z+1),\label{eq23triangle}\\
x(x+1)y(y+1)(y+2)(y+3)&=&z(z+1) \label{eq24triangle}
\end{eqnarray}
have infinitely many solutions in the ring $\mathbb{Z}[t]$. Moreover, for $k_{1}=3, r\geq 2$ and each $r-1$-tuple $k_{2},\ldots, k_{r}$ of positive integers the Diophantine equation
(\ref{EGtype}) has at least six solutions in the ring $\mathbb{Z}[x_{2},\ldots,x_{r}]$.
\end{thm}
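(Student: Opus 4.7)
The plan is to handle the three assertions of Theorem~\ref{EGvar} separately, relying on a Pell-type device for the two infinite-family claims and on a vanishing observation for the last. For both \eqref{eq23triangle} and \eqref{eq24triangle}, the substitution $X = 2x+1$, $Z = 2z+1$ (motivated by $(2a+1)^2 = 4a(a+1)+1$) recasts the equation as $Z^2 - D X^2 = 1 - D$, where $D = y(y+1)(y+2)$ or $D = y(y+1)(y+2)(y+3)$ respectively, with the trivial point $(X,Z) = (1,1)$ always available. Further solutions arise by multiplying this point by (powers of) a polynomial solution $(P,Q)$ of the companion Pell equation $P^2 - D Q^2 = 1$ in the quadratic extension $\mathbb{Z}[y,t][\sqrt D]$.

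For \eqref{eq23triangle} the cubic $D$ has odd degree in $y$, so Pell admits no non-trivial solution in $\mathbb{Z}[y]$. I would overcome this with the substitution $y = t^2 - 1$: now $D = t^2(t^4-1)$ and $(P,Q) = (2t^4-1, 2t)$ is easily verified to solve Pell. Iterating the action of $(P,Q)$ on $(1,1)$ produces polynomial $(X_n, Z_n)$; a short reduction mod $2$ (exploiting that $P \equiv 1$, $Q \equiv 0 \pmod 2$ is a property inherited by every Pell power in $\mathbb{Z}[t]$) gives $X_n \equiv Z_n \equiv 1 \pmod 2$, so that $x_n = (X_n-1)/2$ and $z_n = (Z_n-1)/2$ genuinely lie in $\mathbb{Z}[t]$; their growing degrees guarantee the solutions are pairwise distinct. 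For \eqref{eq24triangle} the quartic $D$ satisfies the fortunate identity $D = (y^2+3y+1)^2 - 1$, so Pell already admits the polynomial solution $(P,Q) = (y^2+3y+1,\,1)$ directly over $\mathbb{Z}[y]$; since this $Q$ is odd, I would multiply the trivial point not by $(P,Q)$ but by $(P,Q)^2 = (2(y^2+3y+1)^2 - 1,\,2(y^2+3y+1))$, whose $Q$-coordinate is even, and then iterate to produce infinitely many polynomial solutions.

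The final assertion reduces to a simple vanishing observation: since $k_1 = 3$, the factor $f(x_1,3,d) = x_1(x_1+d)(x_1+2d)$ is identically zero whenever $x_1 \in \{0, -d, -2d\}$, and $y(y+d)$ is identically zero whenever $y \in \{0, -d\}$. Fixing any nonzero $d \in \mathbb{Z}[x_2,\ldots,x_r]$ (for concreteness $d = 1$), the six distinct triples $(x_1, y, d)$ with $(x_1, y) \in \{0, -d, -2d\} \times \{0, -d\}$ then satisfy \eqref{EGtype} trivially, both sides being identically zero, irrespective of the tuple $(k_2, \ldots, k_r)$. The main technical obstacle is thus confined to \eqref{eq23triangle} and \eqref{eq24triangle}: the parity check ensuring that the Pell-iterated $(X_n, Z_n)$ produce bona fide elements of $\mathbb{Z}[t]$ rather than of $\mathbb{Z}[t][1/2]$; once that is in place, everything else is routine computation.
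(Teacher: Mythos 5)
Your treatment of equations \eqref{eq23triangle} and \eqref{eq24triangle} is correct and follows essentially the same route as the paper: reduce to the Pell-type equation $Z^2-DX^2=1-D$ via $X=2x+1$, $Z=2z+1$, and iterate a polynomial Pell unit. The details differ: for \eqref{eq23triangle} the paper takes $y=4t^2+1$ and starts the recursion from an explicitly found degree-$7$ solution, while you take $y=t^2-1$ with unit $(2t^4-1,2t)$ and start from the trivial point $(1,1)$; for \eqref{eq24triangle} the paper starts from an explicit degree-$6$ solution and must pass to even-index iterates evaluated at $2t+1$ to repair parities, whereas your device of squaring the unit $(y^2+3y+1,1)$ so that its $Q$-coordinate becomes even is cleaner and equally valid. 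In both cases your parity bookkeeping ($P\equiv 1$, $Q\equiv 0\pmod{2}$ forces $X_n\equiv Z_n\equiv 1\pmod{2}$) is sound, and the strictly growing degrees give infinitely many distinct solutions.

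The last assertion is where your proposal fails. The six ``solutions'' you exhibit are the degenerate ones in which $f(x_1,3,d)$ and $y(y+d)$ both vanish identically, so that \eqref{EGtype} reads $0=0$. This makes no real use of the equation --- the same count $2k_1$ of zero solutions is available for any block lengths, and the tuple $k_2,\dots,k_r$ plays no role --- and it is not what the theorem asserts: throughout the paper, as in the Erd\H{o}s--Graham literature it builds on, solutions with vanishing product are regarded as trivial and excluded (compare the explicit definition of trivial solutions for equations \eqref{z2} and \eqref{z3}). The intended content is the existence of six solutions with $\prod_i f(x_i,k_i,d)\neq 0$. The paper obtains these by a genuinely different mechanism, which is the idea your proposal is missing: setting $A=\prod_{i=2}^{r}f(x_i,k_i,d)$, the hypothesis $k_1=3$ makes $Ax(x+d)(x+2d)=y(y+d)$ a genus-one curve over $\mathbb{Q}(A,d)$; on its Weierstrass model $Y^2=X^3+12AdX^2+32A^2d^2X+16A^2d^2$ the three points coming from $y=0$ give points $P_1,P_2,P_3$, and the six points $2P_i$ and $2P_i+2P_j$ ($i\neq j$) pull back to solutions with polynomial coordinates such as $x=Ad^2-d$, $y=A^2d^3-d$, for which both sides of \eqref{EGtype} are nonzero elements of $\mathbb{Z}[x_2,\dots,x_r]$. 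Some construction of this kind is needed to complete your argument.
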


\begin{thm}\label{2-2-4}
The Diophantine equation
\begin{equation}\label{eq224}
x(x+1)y(y+1)=z(z+1)(z+2)(z+3)
\end{equation}
has infinitely many solutions in positive integers satisfying the condition $(z-x)(z-x+2)\neq 0$.
\end{thm}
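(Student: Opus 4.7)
The plan is to reduce the equation to a classical Pell equation after a convenient substitution. Using the identities $4n(n+1) = (2n+1)^2 - 1$, $4z(z+3) = (2z+3)^2 - 9$ and $4(z+1)(z+2) = (2z+3)^2 - 1$, multiplication of \eqref{eq224} by $16$ together with the substitution $X = 2x+1$, $Y = 2y+1$, $V = 2z+3$ transforms it into the symmetric form
\[
(X^2 - 1)(Y^2 - 1) = (V^2 - 1)(V^2 - 9),
\]
to be solved in odd positive integers. The forbidden cases $x = z$ and $x = z+2$ correspond precisely to the obvious factored solution $\{X,Y\} = \{V-2,\ V+2\}$, so a different way of splitting the right-hand side is required.

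I would impose the stronger system
\[
(X-1)(Y-1) = (V-1)(V-3), \qquad (X+1)(Y+1) = (V+1)(V+3).
\]
Adding and subtracting yields $X + Y = 4V$ and $XY = V^2 + 2$, so $X$ and $Y$ are the roots of $T^2 - 4VT + (V^2 + 2) = 0$, namely $T = 2V \pm \sqrt{3V^2 - 2}$. Integrality therefore reduces to the Pell-type equation
\[
m^2 - 3V^2 = -2,
\]
whose solutions $(m_n, V_n)$ are generated from the fundamental one $(1, 1)$ by multiplication by the unit $2 + \sqrt{3} \in \mathbb{Z}[\sqrt{3}]$; the initial entries are $(1,1), (5,3), (19,11), (71,41), (265,153), \ldots$

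It remains to verify admissibility. A mod $4$ inspection of $m^2 + V^2 \equiv 2 \pmod 4$ forces $m$ and $V$ both to be odd in every solution, so $X = 2V + m$ and $Y = 2V - m$ are odd, and the trivial inequality $4V^2 > 3V^2 - 2$ gives $Y > 0$. A direct computation produces $x - z = (V+m+2)/2$, which exceeds $2$ as soon as $V \geq 3$, so the exclusion $(z-x)(z-x+2) \neq 0$ holds and $x,y,z$ are all positive integers from $(m,V) = (19,11)$ onward; the first admissible triple is $(x,y,z) = (20,1,4)$, and indeed $20 \cdot 21 \cdot 1 \cdot 2 = 840 = 4 \cdot 5 \cdot 6 \cdot 7$. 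The one non-routine step is guessing the correct factorization of the right-hand side; once it is in place, everything else is standard Pell bookkeeping.
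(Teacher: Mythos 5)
Your argument is correct and is essentially the paper's own: the ansatz $(X-1)(Y-1)=(V-1)(V-3)$, $(X+1)(Y+1)=(V+1)(V+3)$ is equivalent to $X+Y=4V$ and $XY=V^2+2$, i.e.\ to intersecting the surface with the plane $x+y=4z+5$ and taking one of the two conic components, which is exactly the curve $C_1:\;U^2-3W^2=-2$ used in the paper (your family $(20,1,4),(76,5,19),\dots$ is the paper's first family with $x$ and $y$ swapped). The only difference is that the paper also exploits the second component of that plane section, the Pell conic $U'^2-5W'^2=-4$, to produce a further infinite family, which is not needed for the statement.
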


In \cite{Ulas2009} the second author proved that the system of Diophantine equations
\begin{equation*}
\begin{cases}
  \begin{array}{ccc}
    x(x+1)+y(y+1) & = & p(p+1) \\
    y(y+1)+z(z+1) & = & q(q+1) \\
    z(z+1)+x(x+1) & = & r(r+1) \\
  \end{array}
\end{cases}
\end{equation*}
has infinitely many solutions in integers satisfying the condition $0<x<y<z$. One can ask whether similar phenomenon holds for the multiplicative version of the above system.
More precisely: does the system of Diophantine equations
\begin{equation}\label{multsys}
\begin{cases}
  \begin{array}{ccc}
    x(x+1)y(y+1) & = & p(p+1) \\
    y(y+1)z(z+1) & = & q(q+1) \\
    z(z+1)x(x+1) & = & r(r+1) \\
  \end{array}
\end{cases}
\end{equation}
have infinitely many solutions in integers satisfying the condition $1<x<y<z$? Motivated by this question we prove the following:

\begin{thm}\label{MULT}
The system {\rm (\ref{multsys})} has infinitely many solutions in the ring of polynomials $\mathbb{Z}[t]$.
\end{thm}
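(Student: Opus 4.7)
The plan is to exhibit an explicit polynomial parametrization by reducing the system to a polynomial Pell-like equation in $\mathbb{Z}[t]$, and then to obtain infinitely many polynomial solutions by composing this parametrization with polynomial substitutions $t \mapsto f(t)$.

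Motivated by the identity $t(t+1)\cdot(t-1)t = (t^2-1)\bigl((t^2-1)+1\bigr)$, I would first set $x = t$ and $y = t-1$, which makes the first equation of \eqref{multsys} a polynomial identity with $p = t^2-1$. The remaining two equations then demand polynomials $z,q,r \in \mathbb{Z}[t]$ satisfying $4t(t-1)z(z+1)+1 = (2q+1)^2$ and $4t(t+1)z(z+1)+1 = (2r+1)^2$. Writing $U=2q+1$, $V=2r+1$ and eliminating $z(z+1)$ yields the polynomial Pell-like equation
\[
(t+1)\,U^2 \;-\; (t-1)\,V^2 \;=\; 2 \quad\text{in}\ \mathbb{Z}[t],
\]
which is stable under the recurrence $(U,V)\mapsto \bigl(tU+(t-1)V,\;(t+1)U+tV\bigr)$ induced by the fundamental unit $t+\sqrt{t^2-1}$ of norm one in $\mathbb{Z}[t,\sqrt{t^2-1}]$.

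Iterating this recurrence three times from the seed $(1,1)$ produces $(U,V) = (8t^3-4t^2-4t+1,\;8t^3+4t^2-4t-1)$, for which $(V^2-1)/(4t(t+1)) = (4t^2-2)(4t^2-1)$; hence $z=4t^2-2$ is a polynomial in $\mathbb{Z}[t]$. Direct verification then shows that the tuple
\[
(x,y,z,p,q,r) \;=\; \bigl(t,\;t-1,\;4t^2-2,\;t^2-1,\;2t(t-1)(2t+1),\;(2t+1)(2t^2-1)\bigr)
\]
solves all three equations of \eqref{multsys} identically; the check reduces to the factorizations $q+1 = (2t-1)(2t^2-1)$ and $r+1 = 2t(t+1)(2t-1)$. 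Infinitely many polynomial solutions are then obtained by composing this base parametrization with any nonconstant substitution $t\mapsto f(t)\in\mathbb{Z}[t]$: distinct choices of $f$ yield pairwise distinct polynomial tuples solving \eqref{multsys}.

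The main obstacle is the Pell-alignment step: the recurrence preserves polynomial $U,V$ (and hence polynomial $q,r$), but the induced $z(z+1)=(V^2-1)/(4t(t+1))$ is generically not of the form $n(n+1)$ for polynomial $n$. Identifying the specific iterate — here the third — at which the Pell orbits over $\mathbb{Z}[\sqrt{t(t-1)}]$ and $\mathbb{Z}[\sqrt{t(t+1)}]$ align so that $z$ is polynomial is the essential computation.
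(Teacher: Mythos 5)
Your explicit tuple is correct and is obtained by a genuinely different route than the paper's. The identity $t(t+1)\cdot(t-1)t=(t^2-1)t^2$ does dispose of the first equation with $p=t^2-1$, your recurrence does preserve $(t+1)U^2-(t-1)V^2=2$, and at the third iterate one has $V_3^2-1=8t(t+1)(2t-1)(2t+1)(2t^2-1)$ and $U_3^2-1=8t(t-1)(2t-1)(2t+1)(2t^2-1)$, so that $z=4t^2-2$, $q=2t(t-1)(2t+1)$, $r=(2t+1)(2t^2-1)$ indeed satisfy all three equations together with $x=t$, $y=t-1$, $p=t^2-1$.

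The gap is the final step. Deriving ``infinitely many solutions'' by substituting $t\mapsto f(t)$ into one solution proves the statement only in a degenerate sense: by that argument \emph{any} equation admitting a single nonconstant polynomial solution automatically has infinitely many, which is inconsistent with how polynomial solutions are counted throughout the paper (Theorem \ref{23squarethm} asserts that equation (\ref{eq25square}) has \emph{at least two} solutions in $\mathbb{Z}[t]$, and Theorem \ref{EGvar} asserts \emph{at least six}; both counts would be vacuous under your convention). What the theorem asserts, and what the paper proves, is an infinite family not obtained from a single solution by substitution: the paper fixes $x=t$, rewrites the first equation as $P^2-t(t+1)Y^2=1-t(t+1)$, generates a Pell sequence $(P_n,Y_n)$ of unboundedly growing degrees, sets $y=y_n$, $z=y_{n+1}$, $p=p_n$, $r=p_{n+1}$, and then disposes of the middle equation by an explicit polynomial identity showing $y_n(y_n+1)y_{n+1}(y_{n+1}+1)=q_n(q_n+1)$ with $q_n\in\mathbb{Z}[t]$. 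In your setup the analogous task is exactly the ``alignment'' you flag as the essential computation: one must show that $(V_n^2-1)/(4t(t+1))$ has the form $z(z+1)$ with $z\in\mathbb{Z}[t]$ for \emph{infinitely many} $n$, whereas you exhibit only $n=3$ (and $n=1,2$ visibly fail, giving $z(z+1)=1$ and $z(z+1)=4t^2-1$). Without either such an alignment result or an identity in the spirit of the paper's, the proposal establishes one polynomial solution, not an infinite family in the intended sense.
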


Consider the Diophantine equations
\begin{equation}\label{z2}
(x-b)x(x+b)(y-b)y(y+b)=z^2
\end{equation}
and
\begin{equation}\label{z3}
(x-b)x(x+b)(y-b)y(y+b)=(z-b)z(z+b)
\end{equation}
where $b\in\mathbb{N}$ is a parameter. If a solution $(x,y,z)$ satisfies $b\mid x$ and $b\mid y,$ we call it trivial.
If $b=1,$ then Sastry \cite{Guy1994} noted that \eqref{z2} has infinitely many positive integer solutions
$(x,y,z),$ where $y = 2x-1$ and $(x + 1)(2x-1)$ is a square.
Zhang and Cai \cite{ZC} proved that there exist infinitely many nontrivial positive integer solutions of the Diophantine equation \eqref{z2} if $b\geq 2$ is an even integer.
They note that it is likely that for odd $b\geq 3$ integers there are also infinitely many solutions.
%We will prove this statement.
They showed that \eqref{z3} has infinitely many nontrivial positive integer solutions for $b=1$, and the set of rational solutions of it is dense in the set of real solutions for $b\geq 1.$
They posed the following question.
Are all the nontrivial positive integer solutions of \eqref{z3} for $b = 1$ with
$x \leq y$ given by $(F_{2n-1},F_{2n+1}, F_{2n}^2), n \geq 1$? We prove that all "large" solutions have this shape while "small" solutions belong to certain intervals.
We have the following statements.
% \begin{thm}
% For odd number $b \geq 3$, equation \eqref{z2} has infinitely many nontrivial positive
% integer solutions.
% \end{thm}
\begin{thm}\label{ZC2}
Let $(x,y,z)$ be a nontrivial positive integer solution of equation \eqref{z2} and $k=y-x.$
Either
$$
x=-\frac{48 \, b^{2} k - 3 \, k^{3} \pm 2 \,
{\left(4 \, b^{2} - k^{2}\right)} \sqrt{-48
\, b^{2} + 3 \, k^{2}}}{6 \, {\left(16 \,
b^{2} - k^{2}\right)}}
$$
or $$1\leq x\leq \max_{1\leq i\leq 3} B_i,$$
where
\begin{eqnarray*}
B_1&=&2\max\left|-6 \, b^{2} k^{2} + \frac{3}{8} \, k^{4} \pm \frac{3}{2} \, k\right|,\\
B_2&=&2\max\left|-6 \, b^{2} k^{3} + \frac{3}{8} \, k^{5} \mp b^{2} \pm \frac{3}{8} \, k^{2}\right|^{1/2},\\
B_3&=&2\max\left|-b^{4} k^{2} - \frac{1}{4} \, b^{2} k^{4} -\frac{1}{64} \, k^{6} \pm \frac{1}{4} \, b^{2}k \pm \frac{1}{32} \, k^{3} - \frac{1}{64}\right|^{1/3}.
\end{eqnarray*}
\end{thm}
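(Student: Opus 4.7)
The plan is to apply Runge's method together with Fujiwara's bound on the roots of an auxiliary cubic polynomial. Writing $y = x + k$, one has $z^2 = f(x)$ where
$$
f(x) = x(x^2 - b^2)(x+k)\bigl((x+k)^2 - b^2\bigr) \in \mathbb{Z}[x]
$$
is monic of degree six in $x$. I would first compute the unique monic $R(x) \in \mathbb{Q}[x]$ of degree three such that $S(x) := f(x) - R(x)^2$ has degree at most two, obtaining
$$
R(x) = x^3 + \tfrac{3k}{2}x^2 + \bigl(\tfrac{3k^2}{8} - b^2\bigr)x - \tfrac{k^3 + 8 b^2 k}{16},
$$
$$
S(x) = \tfrac{k^2}{256}\bigl[12(k^2 - 16 b^2)\,x(x+k) - (k^2 + 8 b^2)^2\bigr].
$$
Clearing denominators shows $16\,R(x) \in \mathbb{Z}[x]$, so that for any positive integer solution of \eqref{z2}, the identity
$$
(16z - 16R(x))(16z + 16R(x)) = 256\, S(x)
$$
is a factorization of integers.

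Next I would split into two cases according to whether $16 z = 16 R(x)$. If so, then $S(x) = 0$; this is a quadratic in $x$ whose discriminant simplifies via
$$
144\,k^2(k^2 - 16b^2)^2 + 48(k^2 - 16b^2)(k^2 + 8b^2)^2 = 192(k^2 - 16b^2)(k^2 - 4b^2)^2
$$
into a clean radical, and the quadratic formula yields exactly the two explicit algebraic values of $x$ displayed in the first alternative of the statement.

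Otherwise $|16 z - 16 R(x)| \geq 1$ as a nonzero integer. Comparing $(16z)^2$ to $(16R(x) \pm 1)^2$ and using $16z + 16 R(x) > 0$ for positive large $x$ yields, in one of the two linked sign cases, the monic cubic inequality
$$
R(x) \mp 8\,S(x) \pm \tfrac{1}{32} \leq 0.
$$
Applying Fujiwara's bound
$$
|x| \leq 2 \max\bigl(|a_2|,\, |a_1|^{1/2},\, |a_0|^{1/3}\bigr)
$$
to each of these two cubics and maximising over the signs reproduces $B_1$, $B_2$, and $B_3$; in particular the constant term collapses into a perfect square of the form $(2R(0) \pm \tfrac{1}{8})^2$, which explains the completed-square structure of the expression inside $B_3$. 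The most delicate part will be the algebraic bookkeeping: deriving the precise cubic inequality from the integrality gap (keeping track of the linked sign combinations of Case B1 versus Case B2) and then verifying that each Fujiwara coefficient, after simplification, matches the corresponding $B_i$ formula in the statement.
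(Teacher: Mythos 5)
Your proposal is correct and follows essentially the same route as the paper: Runge's method with the degree-three polynomial part $R(x)$ of $\sqrt{F(x)}$ (the paper's $P(x)$), the integrality gap $|16z-16R(x)|\geq 1$ leading to the two cubics $256F(x)-(16R(x)\pm 1)^2$ (your $R\mp 8S\pm\tfrac{1}{32}$ after rescaling by $\mp 32$), Fujiwara's bound to obtain $B_1,B_2,B_3$, and the degenerate case $S(x)=0$ yielding the explicit algebraic values of $x$. Your explicit factorization of $S(x)$ and the discriminant identity are correct and match the paper's computations.
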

\begin{cor}\label{cor1}
If $3\leq b\leq 13,$ $b$ is odd and $2b< k\leq 300,$ then all nontrivial positive integer solutions of equation \eqref{z2}
are as follows
{\tiny
\begin{center}
\begin{tabular}{|l|l|l|l|l|l|} \hline
$b$ & $(x,y,z)$ & $b$ & $(x,y,z)$
& $b$ & $(x,y,z)$ \\ \hline \hline
$3$ & $\left(5, 12,
360\right)$ & $5$ &
$\left(145, 343,
11083800\right)$ & $7$ &
$\left(250, 507,
45103500\right)$ \\ \hline
$3$ & $\left(7, 18,
1260\right)$ & $5$ &
$\left(33, 280, 877800\right)$
& $9$ & $\left(15, 36,
9720\right)$ \\ \hline
$3$ & $\left(4, 21,
504\right)$ & $5$ &
$\left(16, 275, 277200\right)$
& $9$ & $\left(21, 54,
34020\right)$ \\ \hline
$3$ & $\left(8, 33, 3960\right),
\left(35, 60, 95760\right)$ & $7$
& $\left(10, 27,
3060\right)$ & $9$ &
$\left(12, 63, 13608\right)$ \\
\hline
$3$ & $\left(10, 42,
8190\right)$ & $7$ &
$\left(105, 128, 1552320\right)$
& $9$ & $\left(24, 99,
106920\right), \left(105, 180,
2585520\right)$ \\ \hline
$3$ & $\left(7, 45,
5040\right)$ & $7$ &
$\left(8, 42, 2940\right), \left(41,
75, 167280\right)$ & $9$ &
$\left(11, 90, 17820\right)$ \\
\hline
$3$ & $\left(32, 87,
146160\right)$ & $7$ &
$\left(34, 75, 125460\right)$
& $9$ & $\left(30, 126,
221130\right)$ \\ \hline
$3$ & $\left(93, 245,
3437280\right)$ & $7$ &
$\left(9, 56, 7056\right)$ &
$9$ & $\left(21, 135,
136080\right)$ \\ \hline
$3$ & $\left(125, 363,
9662400\right)$ & $7$ &
$\left(32, 91, 152880\right)$
& $9$ & $\left(25, 153,
220320\right)$ \\ \hline
$3$ & $\left(77, 333,
4102560\right)$ & $7$ &
$\left(13, 98, 38220\right)$
& $9$ & $\left(10, 171,
30780\right)$ \\ \hline
$5$ & $\left(7, 30,
2100\right)$ & $7$ &
$\left(42, 128, 388080\right)$
& $9$ & $\left(96, 261,
3946320\right)$ \\ \hline
$5$ & $\left(11, 49,
11088\right)$ & $7$ &
$\left(8, 105, 11760\right)$
& $11$ & $\left(91, 119,
1113840\right)$ \\ \hline
$5$ & $\left(6, 49,
2772\right)$ & $7$ &
$\left(8, 128, 15840\right)$
& $11$ & $\left(13, 132,
37752\right)$ \\ \hline
$5$ & $\left(11, 55,
13200\right)$ & $7$ &
$\left(12, 140, 55860\right)$
& $11$ & $\left(12, 253,
66792\right)$ \\ \hline
$5$ & $\left(6, 55, 3300\right),
\left(21, 70, 54600\right)$ & $7$
& $\left(18, 169,
154440\right)$ & $13$ &
$\left(22, 77, 55440\right)$ \\
\hline
$5$ & $\left(7, 75,
8400\right)$ & $7$ &
$\left(32, 189, 458640\right)$
& $13$ & $\left(14, 169,
42588\right)$ \\ \hline
$5$ & $\left(19, 100,
79800\right)$ & $7$ &
$\left(11, 169, 61776\right)$
& $13$ & $\left(15, 182,
70980\right)$ \\ \hline
$5$ & $\left(3605, 3703,
48773919600\right)$ & $7$ &
$\left(185, 363,
17387040\right)$ & $13$ &
$\left(99, 288, 4767840\right)$
\\ \hline
\end{tabular}
\end{center}
\begin{center}{\rm Table 1}\end{center}}
\end{cor}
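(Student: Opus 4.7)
The plan is a finite computational verification driven by Theorem \ref{ZC2}. For each of the six odd values $b \in \{3,5,7,9,11,13\}$ and each integer $k$ in the range $2b < k \leq 300$, the theorem provides an exhaustive dichotomy for any nontrivial positive integer solution $(x,y,z)$ of \eqref{z2} with $k = y - x$: either $x$ equals one of the two explicit algebraic values displayed in the theorem, or $x$ lies in the bounded range $1 \leq x \leq \max(B_1,B_2,B_3)$. So the task reduces to running through the finitely many pairs $(b,k)$ and, for each, processing both alternatives.

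First I would treat the algebraic case. The expression
\[
x=-\frac{48 b^{2} k - 3 k^{3} \pm 2 (4 b^{2} - k^{2}) \sqrt{3 k^{2}-48 b^{2}}}{6 (16 b^{2} - k^{2})}
\]
yields at most two candidates per pair $(b,k)$. For such a candidate to be a positive integer one needs $3(k^2-16b^2)$ to be a nonnegative perfect square and the resulting rational expression to evaluate to a positive integer; both conditions are immediate to check. Any surviving candidate is then tested for nontriviality (i.e.\ that $b \nmid x$ or $b \nmid y := x+k$) and, if valid, the corresponding $z$ is computed and the triple is recorded.

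Next I would handle the bounded case. For each $(b,k)$ I compute $B_1,B_2,B_3$ from the formulas in Theorem \ref{ZC2}, set $B = \max(B_1,B_2,B_3)$, and loop $x = 1,2,\dots,\lfloor B\rfloor$. For each such $x$ I form $y = x+k$, evaluate the product $(x^2-b^2)\,x\,(y^2-b^2)\,y$, test whether it is a perfect square, and if so, verify the nontriviality condition. Merging the output with that of the algebraic case produces the list displayed in Table 1.

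The principal obstacle is computational scale rather than new mathematical content. The dominant bound $B_3$ grows like $(k^6+b^2k^4+b^4k^2)^{1/3}$, so for $b=13$, $k=300$ the search range reaches the order of $10^5$ candidates for $x$. Summed over the roughly $1700$ admissible pairs $(b,k)$, this amounts to on the order of $10^8$ perfect-square tests, which is comfortably within the capability of a standard computer algebra system. No input beyond Theorem \ref{ZC2} and a direct search is needed, and care is only required to organize the loops and the nontriviality check so that the enumeration terminates within reasonable time.
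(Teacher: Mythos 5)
Your proposal is correct and matches the paper's approach: the authors likewise reduce Corollary \ref{cor1} to a finite computer search (in Sage) over the range supplied by Theorem \ref{ZC2}, checking the algebraic candidates and exhaustively testing $1\leq x\leq\max_{1\leq i\leq 3}B_i$ for each admissible pair $(b,k)$; your write-up simply makes explicit the enumeration that the paper's one-line proof leaves implicit. The only detail worth adding is the degenerate pair $k=4b$ (where the displayed formula for $x$ becomes $0/0$ and $F(x)-P(x)^2$ reduces to a nonzero constant, so that branch contributes nothing), which a careful implementation must treat separately.
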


\begin{rem}
We computed all nontrivial solutions with $3\leq b\leq 25,$ $b$ is odd and $2b< k\leq 300.$ There are 144
such solutions, the list can be downloaded from \url{http://math.unideb.hu/media/tengely-szabolcs/XblockYblockZ2.txt}.
We note that the total running time of our calculations was 11.6 hours on an Intel Core i5 2.6GHz PC.
\end{rem}
\begin{thm}\label{Fib}
Let $(x,y,z)$ be a nontrivial positive integer solution of equation \eqref{z3} with $b=1$. Either
$$
(x,y,z)=(F_{2n-1},F_{2n+1}, F_{2n}^2)\quad\mbox{for some } n\geq 1
$$
or
$$
1\leq x\leq -\frac{1}{2} \, k + \frac{1}{2} \,\sqrt{3 \, k^{2} + 2 \, k\sqrt{k^{2} + 4} +4},
$$
where $k=y-x.$
\end{thm}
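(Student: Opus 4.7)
The plan is to substitute $y = x+k$, reduce everything to the single quantity $w := xy = x^2 + kx$, and exploit strict monotonicity of $t \mapsto t^3 - t$ on $[1,\infty)$ to pin down the integer $z$ near $w-1$; the Fibonacci case then appears as the unique Pell-type equality, while every other solution gives a bound on $w$ and hence on $x$.

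First I would rewrite the equation $(x^3-x)(y^3-y) = z^3 - z$, using the easy identity $x^2 + y^2 = 2w + k^2$, as
\[
z^3 - z = w^3 - 2w^2 - (k^2-1)w.
\]
From $w^3 - w - (z^3 - z) = 2w^2 + (k^2-2)w > 0$ (valid for $w,k \geq 1$) I obtain $z < w$, hence $z \leq w-1$. The pivotal computation
\[
(w-1)^3 - (w-1) - (z^3 - z) = w\bigl(k^2 + 1 - w\bigr)
\]
then splits into three cases according to the sign of $k^2 + 1 - w$: if $w > k^2 + 1$ then $z > w-1$, contradicting $z \leq w-1$; if $w = k^2 + 1$ then $z = w-1$ exactly; and if $w < k^2 + 1$ then $z \leq w-2$.

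In the equality case $w = k^2 + 1$, the quadratic $x^2 + kx - (k^2+1) = 0$ has a positive integer root iff $5k^2 + 4$ is a perfect square. By the classical Lucas identity $L_{2n}^2 = 5F_{2n}^2 + 4$ this happens exactly for $k = F_{2n}$, yielding $x = (L_{2n} - F_{2n})/2 = F_{2n-1}$, $y = F_{2n+1}$, and $z = xy - 1 = F_{2n}^2$ by Cassini's identity; this is precisely the Fibonacci family in the statement. In the remaining case $w < k^2 + 1$, since $w$ is an integer we have $w \leq k^2$; because one checks $k^2 + 1 < \phi^2 < k^2 + 2$ for $\phi = (k + \sqrt{k^2+4})/2$ the positive root of $t^2 - kt - 1$, this is equivalent to the real-valued bound $w \leq \phi^2$. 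Solving $x^2 + kx \leq \phi^2$ and simplifying with $\phi^2 = k\phi + 1$ gives
\[
x \leq -\tfrac{k}{2} + \tfrac{1}{2}\sqrt{3k^2 + 2k\sqrt{k^2+4} + 4},
\]
which is the bound in the theorem.

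The main obstacle is the number-theoretic identification in the Fibonacci case: pinning down the integer solutions of $5k^2 + 4 = \square$ as the even-indexed Fibonacci numbers and matching the resulting triple $(x,y,z)$ with $(F_{2n-1}, F_{2n+1}, F_{2n}^2)$ via Lucas' and Cassini's identities. Everything else is a clean sign analysis driven by the single polynomial identity above.
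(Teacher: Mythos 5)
Your proof is correct, and it reaches the theorem by a genuinely different implementation of the same underlying ``squeeze between consecutive cubes'' idea. The paper applies Runge's method directly in the variable $x$: it sandwiches $F(x)=(x-1)x(x+1)(y-1)y(y+1)$ between $(x^2+kx-1)^3$ and $(x^2+kx)^3$ \emph{for $x$ large}, the lower inequality holding precisely when $x$ exceeds the stated bound (which arises as the largest real root of $F(x)-(x^2+kx-1)^3$), then combines this with $(z-1)^3<(z-1)z(z+1)<(z+1)^3$ to force $z\in\{x^2+kx-1,\,x^2+kx\}$ and finishes by factoring the two residual polynomial equations. You instead pass to $w=xy$ and compare values of the single strictly increasing function $t\mapsto t^3-t$; the exact identities $w^3-w-(z^3-z)=w(2w+k^2-2)$ and $(w-1)^3-(w-1)-(z^3-z)=w(k^2+1-w)$ give $z<w$ unconditionally (so the paper's case $z=x^2+kx$ never arises and its ``overlapping blocks'' digression is unnecessary) and an exact trichotomy on $w$ versus $k^2+1$ with no asymptotic hypothesis: $w>k^2+1$ is impossible, $w=k^2+1$ forces $z=w-1$ and the Pell condition $5k^2+4=\square$, hence the Fibonacci family via the Lucas and Cassini identities (the same endgame as the paper), and $w\le k^2$ yields the stated bound on $x$ after translating through $\phi^2=k\phi+1$. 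Your version buys a cleaner case analysis, avoids computing roots of a quartic, and in fact proves the slightly sharper statement that every non-Fibonacci solution satisfies $xy\le k^2$; the paper's version stays closer to the standard Runge template it also uses for Theorem 2.6. One cosmetic point: for integer $w$ the condition $w\le\phi^2$ is equivalent to $w\le k^2+1$ rather than to $w\le k^2$, but you only use the implication $w\le k^2\Rightarrow w\le\phi^2$, which is what the bound on $x$ requires, so nothing is affected.
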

Based on the previous theorem we have the following numerical result.
\begin{cor}
If $4\leq k\leq 5000,$ then all nontrivial positive integer solutions of equation \eqref{z3} with $b=1$ have
the form $(x,y,z)=(F_{2n-1},F_{2n+1}, F_{2n}^2)\quad\mbox{for some } n\geq 1.$
\end{cor}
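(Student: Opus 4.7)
The proof is computational and relies directly on Theorem \ref{Fib}. That theorem reduces the problem to a finite search: any nontrivial positive integer solution $(x,y,z)$ of equation \eqref{z3} with $b=1$ either already has the Fibonacci form, or else satisfies
$$
1 \leq x \leq B(k) := -\frac{1}{2}k + \frac{1}{2}\sqrt{3k^{2} + 2k\sqrt{k^{2}+4} + 4}, \qquad k = y-x.
$$
Using the elementary estimate $\sqrt{k^{2}+4}\leq k+1$ gives $B(k) \leq \frac{1}{2}\bigl(-k + \sqrt{5k^{2}+2k+4}\bigr)$, which grows like $\tfrac{\sqrt{5}-1}{2}k$. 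Hence in the range $4\leq k\leq 5000$ the total number of pairs $(k,x)$ to inspect is of order $\sum_{k=4}^{5000}\lfloor B(k)\rfloor \approx 7.7\times 10^{6}$, a very modest figure for a direct machine search.

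For each such candidate $(k,x)$, I would set $y:=x+k$ and form
$$
N := (x-1)x(x+1)(y-1)y(y+1),
$$
and then decide whether $N = (z-1)z(z+1) = z^{3}-z$ for some positive integer $z$. Since $z\mapsto z^{3}-z$ is strictly increasing for $z\geq 1$, one computes $z_{0} := \lfloor N^{1/3}\rfloor$ by integer Newton iteration and checks the two candidates $z_{0}^{3}-z_{0}$ and $(z_{0}+1)^{3}-(z_{0}+1)$ against $N$. Each test is a handful of multi-precision operations. Whenever a match occurs, one records the solution and verifies that it has the form $(F_{2n-1},F_{2n+1},F_{2n}^{2})$; such solutions can only occur when $k$ is an even-indexed Fibonacci number, and the only such values inside the range are $k\in\{8,21,55,144,377,987,2584\}$.

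The only care required is numerical: one must not lose a solution near the boundary $x=\lfloor B(k)\rfloor$ through floating-point rounding when evaluating the nested square roots defining $B(k)$. This is easily guaranteed by computing with interval arithmetic, or more simply by overestimating $B(k)$ via the inequality $\sqrt{k^{2}+4}\leq k+1$, extending the search range slightly beyond the true bound, and rounding up. Running the resulting loop and observing that every solution produced is of the Fibonacci form establishes the corollary. There is no genuine mathematical obstacle; Theorem \ref{Fib} does all the work of turning an a priori infinite search into a trivially finite one.
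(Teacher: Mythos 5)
Your proposal is correct and follows essentially the same route as the paper: the authors likewise invoke Theorem \ref{Fib} to reduce the problem to the finite range $1\leq x\leq B(k)$, $4\leq k\leq 5000$, and then dispose of it by a direct computer search (a Sage script). Your additional implementation details (the integer cube-root test for $z^{3}-z=N$ and the careful overestimation of $B(k)$ to avoid rounding losses) only flesh out the same computation.
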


\section{proofs of the results}
Before we present the proofs let us note that if $(Z',X')$ is a solution of the Diophantine equation $Z^2-AX^2=B$ and $(Z,X)$ is s solution of $Z^2-AX^2=1$ with $X\neq 0$, then for each $n$ the pair $(Z_{n},X_{n})$, where
$$
Z_{0}=Z' \quad X_{0}=X',\quad Z_{n}=Z\cdot Z_{n-1}+AX\cdot X_{n-1},\quad X_{n}=X\cdot Z_{n-1}+Z\cdot X_{n-1}
$$
is solution of $Z^2-AX^2=B$.

In order to shorten the notation we write
$$
f_{k}(x):=f(x,k,1)=x(x+1)\cdot\ldots\cdot (x+k-1).
$$
\begin{proof}[Proof of Theorem \ref{23squarethm}]
We observe that the equation (\ref{eq23square}) can be rewritten in the following form
\begin{equation}\label{pelleq23square}
Z^2-f_{3}(y)X^2=-f_{3}(y).
\end{equation}
with $Z=2z$ and $2x+1=X$. In order to solve this equation we take $y=t^2+1$ and we observe that the equation (\ref{pelleq23square}) has the solution
$$
Z'=tf_{3}(t^2+1),\quad X'=t^4+3t^2+1.
$$
Moreover, we note that the Diophantine equation $Z^2-f_{3}(t^2+1)X^2=1$ has the nontrivial solution
$$
Z=t^4+3t^2+1,\quad X=t.
$$
According to remark given at the beginning of the section we see that for each $n$ the pair of polynomials $(Z_{n},X_{n})$ defined by the recurrence relations
\begin{equation*}
\begin{cases}
\begin{array}{lll}
  Z_{0} & = & t f_{3}(t^2+1), \\
  X_{0} & = & t^4+3t^2+1 ,\\
  Z_{n} & = & (t^4+3t^2+1)Z_{n-1}+t f_{3}(t^2+1)X_{n-1},\\
  X_{n} & = & t Z_{n-1}+(t^4+3t^2+1)X_{n-1},
\end{array}
\end{cases}
\end{equation*}
is a solution of the equation (\ref{pelleq23square}). It is clear from the definition that $Z_{n}, X_{n}\in\mathbb{Z}[t]$ for each $n\in\mathbb{N}$. Moreover, by simple induction on $n$ we check that $X_{n}(2t)\equiv 1\pmod{2}$ and $Z_{n}(2t)\equiv 0\pmod{2}$ in the ring of polynomials $\mathbb{Z}[t]$. As a consequence we get that for each $n$ the pair of polynomials
$$
x_{n}(t)=\frac{1}{2}(X_{n}(2t)-1),\quad z_{n}(t)=\frac{1}{2}Z_{n}(2t)
$$
is the solution of equation (\ref{eq23square}) in the ring $\mathbb{Z}[t]$.

\bigskip

In order to get the polynomial solutions of the equation (\ref{eq24square}) we use the same method as above. We take $y=t$, where $t$ is a variable and we rewrite our equation in the form
\begin{equation}\label{pelleq24square}
Z^2-f_{4}(t)X^2=-f_{4}(t)
\end{equation}
with $Z=2z$ and $X=2x+1$. We found that
$$
Z'=f_{4}(t),\quad X'=t^2 + 3t + 1
$$
is a solution of the equation (\ref{pelleq24square}) and the pair
$$
Z=t^2 + 3t + 1,\quad X=1
$$
solves the equation $Z^2-f_{4}(t)X^2=1$. We thus see that for each $n\in\mathbb{N}$ the pair of polynomials $(Z_{n},X_{n})$ defined by the recurrence relations
\begin{equation*}
\begin{cases}
\begin{array}{lll}
  Z_{0} & = & f_{4}(t), \\
  X_{0} & = & t^2+3t+1 ,\\
  Z_{n} & = & (t^2+3t+1)Z_{n-1}+f_{4}(t)X_{n-1},\\
  X_{n} & = & Z_{n-1}+(t^2+3t+1)X_{n-1},
\end{array}
\end{cases}
\end{equation*}
is a solution of the equation (\ref{pelleq24square}) in the ring $\mathbb{Z}[t]$. Similarly as in the previous case one can easily check that $X_{n}(2t)\equiv 1\pmod{2}$ and $Z_{n}(2t)\equiv 0\pmod{2}$ in $\mathbb{Z}[t]$ and in consequence, for each $n$ the pair of polynomials with integer coefficients
$$
x_{n}(t)=\frac{1}{2}(X_{n}(2t)-1),\quad z_{n}(t)=\frac{1}{2}Z_{n}(2t)
$$
is the solution of the equation (\ref{eq24square}).

\bigskip
Finally, in order to show that the equation (\ref{eq25square}) has polynomial solutions we performed numerical search and found that triplets of  polynomials
\begin{equation*}
\begin{array}{lll}
x=2t(t+1)(2t-1)(2t+3),&y=4t^2+4t-3,& z=2x(y+2)(2t+1)(2t^2+2t-1),\\
x=(2t^2+2t+1)(4t^2+4t+5),&y=(2t+1)^2,& z=4x(y+2)(2t+1)(t^2+t+1)
\end{array}
\end{equation*}
satisfy the equation (\ref{eq25square}).
\end{proof}

\begin{proof}[Proof of Theorem \ref{EGvar}]
We proceed in the same way as in the proof of Theorem \ref{23squarethm}. This time we take $y=4t^2+1$, where $t$ is a variable. In this situation our equation (\ref{eq23triangle}) is equivalent with the following one:
\begin{equation}\label{pelleq23triangle}
Z^2-f_{3}(4t^2+1)X^2=1-f_{3}(4t^2+1),
\end{equation}
where $Z=2z+1$ and $X=2x+1$. We found that the pair of polynomials
$$
Z'=128t^7+192t^5-16t^4+88t^3-12t^2+12t-1,\quad X'=16t^4+12t^2-2t+1
$$
satisfies the equation (\ref{pelleq23triangle}). Moreover, the pair
$$
Z=16t^4+12t^2+1,\quad X=2t
$$
satisfies the corresponding equation $Z^2-f_{3}(4t^2+1)X^2=1$. As a consequence we see that for each $n\in\mathbb{N}$ the pair of polynomials $(Z_{n},X_{n})$ defined by the recurrence relations
\begin{equation*}
\begin{cases}
\begin{array}{lll}
  Z_{0} & = & 128t^7+192t^5-16t^4+88t^3-12t^2+12t-1, \\
  X_{0} & = & 16t^4+12t^2-2t+1 ,\\
  Z_{n} & = & (16t^4+12t^2+1)Z_{n-1}+2tf_{3}(4t^2+1)X_{n-1},\\
  X_{n} & = & 2tZ_{n-1}+(16t^4+12t^2+1)X_{n-1},
\end{array}
\end{cases}
\end{equation*}
is a solution of the equation (\ref{pelleq24square}) in the ring $\mathbb{Z}[t]$. A simple induction shows that for each $n\in\mathbb{N}$ we have $X_{n}Z_{n}\equiv 1\pmod{2}$ in the ring $\mathbb{Z}[t]$ and thus the pair
$$
x_{n}=\frac{1}{2}(X_{n}-1),\quad z_{n}=\frac{1}{2}(Z_{n}-1)
$$
is the solution of the equation (\ref{eq23triangle}) with $y=4t^2+1$.

\bigskip

We consider now the equation (\ref{eq24triangle}) with $y=t$. It is equivalent with the following one:
\begin{equation}\label{pelleq24triangle}
Z^2-f_{4}(t)X^2=1-f_{4}(t),
\end{equation}
where $Z=2z+1$ and $X=2x+1$. We found that the pair of polynomials
$$
Z'=2 t^6+18 t^5+58 t^4+78 t^3+36 t^2-1,\quad X'=2 t^4+12 t^3+20 t^2+6 t-1
$$
satisfies the equation (\ref{pelleq24triangle}). Moreover, the pair
$$
Z=t^2 + 3t + 1,\quad X=1
$$
satisfies the corresponding equation $Z^2-f_{4}(t)X^2=1$. As a consequence we see that for each $n\in\mathbb{N}$ the pair of polynomials $(Z_{n},X_{n})$ defined by the recurrence relations
\begin{equation*}
\begin{cases}
\begin{array}{lll}
  Z_{0} & = & 2 t^6+18 t^5+58 t^4+78 t^3+36 t^2-1, \\
  X_{0} & = & 2 t^4+12 t^3+20 t^2+6 t-1,\\
  Z_{n} & = & (t^2 + 3t + 1)Z_{n-1}+f_{4}(t)X_{n-1},\\
  X_{n} & = & Z_{n-1}+(t^2 + 3t + 1)X_{n-1},
\end{array}
\end{cases}
\end{equation*}
is a solution of the equation (\ref{pelleq24triangle}) in the ring $\mathbb{Z}[t]$. A simple induction shows that for each $n\in\mathbb{N}$ we have $X_{2n}(2t+1)Z_{2n}(2t+1)\equiv 1\pmod{2}$ in the ring $\mathbb{Z}[t]$ and thus the pair
$$
x_{n}=\frac{1}{2}(X_{2n}(2t+1)-1),\quad z_{n}=\frac{1}{2}(Z_{2n}(2t+1)-1)
$$
is the solution of the equation (\ref{eq24triangle}) with $y=t$.
\bigskip

Finally, in order to prove the last statement of our theorem let us put
$$
A=\prod_{i=2}^r f(x_i,k_i,d)
$$
and consider the curve
$$
C:\;Ax(x+d)(x+2d)=y(y+d).
$$
From geometric point of view $C$ can be seen as a genus one curve defined over rational function field $\mathbb{Q}(A,d)$. The Weierstrass equation for $C$ is given by
$$
C':\;Y^2=X^3 + 12AdX^2 + 32A^2d^2X + 16A^2d^2,
$$
where the corresponding maps are the following:
$$
\begin{array}{ccl}
  \phi:\;C\ni (x,y) & \mapsto & (X,Y)=(4Ax,4A(2y+d))\in C', \\
  \phi^{-1}:\;C'\ni (X,Y) & \mapsto & (x,y)=\left(\frac{X}{4A},\frac{Y-4Ad}{8A}\right)\in C.
\end{array}
$$
Now using the trivial points with $y=0$ lying on $C$ we can define the points
\begin{eqnarray*}
P_{1}&=&\phi((0,0))=(0,4Ad),\\
P_{2}&=&\phi((-d,0))=(-4Ad,4Ad),\\
P_{3}&=&\phi((-2d,0))=(-8Ad,4Ad).
\end{eqnarray*}
One can easily check that for each $i,j\in\{1,2,3\},i\neq j$ the points $2P_{i}$ and $2P_i+2P_j$ have polynomials with integer coefficients as coordinates and the same is true
for the points $\phi^{-1}(2P_{i})$ and $\phi^{-1}(2P_{i}+2P_j).$ This leads us to the solutions of the equation defining $C$:
\begin{equation*}
\begin{array}{ll}
  x=Ad^2 - d, & y=-A^2d^3\\
  x=Ad^2 - d, & y=A^2d^3 - d, \\
  x=4Ad^2 + d, & y=8A^2d^3 + 6Ad^2\\
  x=4Ad^2 + d, & y=-8A^2d^3 - 6Ad^2 - d\\
  x=4Ad^2 - 3d, & y=8A^2d^3 - 6Ad^2\\
  x=4Ad^2 - 3d,  & y=-8A^2d^3 + 6Ad^2 - d.
\end{array}
\end{equation*}
From the definition of $A$ we know that it is essentially a polynomial in $\mathbb{Z}[x_{2},\ldots,x_{r}]$ and hence we get the statement of our theorem.

\end{proof}

\begin{proof}[Proof of Theorem \ref{2-2-4}]
In order to get the statement of our theorem we consider the intersection of the surface, say $S$, defined by the equation (\ref{eq224}) and the plane $L$ defined by the equation
$$
L:\;x+y=4z+5.
$$
We then observe that $S\cap L=C_{1}\cup C_{2}$, where
\begin{align*}
&C_{1}:\;(2z-4x+1)^2-3(2x+1)^2=-2,\\
&C_{2}:\;(2z+4x+5)^2-5(2x+1)^2=-4.
\end{align*}
Using standard methods we find that all solutions in positive integers of corresponding Pell type equations $U^2-3V^2=-2$ and $U'^2-5V'^2=-4$ are
\begin{equation*}
\begin{array}{lll}
U_{0}=1, V_{0}=1,&  U_{n+1}=2U_{n}+3V_{n}, & V_{n+1}=U_{n}+2V_{n},\\
U'_{0}=1, V'_{0}=1,&  U'_{n+1}=9U'_{n}+20V'_{n},& V'_{n+1}=4U'_{n}+9V'_{n}
\end{array}
\end{equation*}
respectively. One can easily check, by induction on $n$, that $V_{n}V'_{n}\equiv 1\pmod{2}$ and $U_{n}U'_{n}\equiv 1\pmod{2}$ and in consequence, for each $n\in\mathbb{N}$ the triplets
\begin{equation*}
\begin{array}{lll}
  x_{n}=\frac{1}{2}(V_{n}-1),  & y_{n}=\frac{1}{2}(4U_{n}+7V_{n}-1), & z_{n}=\frac{1}{2}(U_{n}+2V_{n}-3), \\
  x_{n}=\frac{1}{2}(V'_{n}-1), & y_{n}=\frac{1}{2}(9V'_{n}-4U_{n}-1),& z_{n}=\frac{1}{2}(U'_{n}-2V'_{n}-3)
\end{array}
\end{equation*}
are non-trivial solutions in non-negative integers of the equation (\ref{eq224}).
\end{proof}
\begin{rem}{\rm
Without much of work one can find that the related Diophantine equation
$$
x(x+1)y(y+1)=z(z+1)(z+2)
$$
has infinitely many solutions in positive integers satisfying the condition $x+1<y, (y-z)(y-z-1)\neq 0$. In fact, the above equation has polynomial solutions of the following form:
\begin{equation*}
\begin{array}{lll}
  x=t,    & y=t^2+t-2,   & z=(t-1)(t+2), \\
  x=t,    & y=t^2+t+1,   & z=t(t+1), \\
  x=8t+3, & y=8t^2+7t+1, & z=2(8t^2+7t+1), \\
  x=8t+4, & y=8t^2+9t+2, & z=2(8t^2+9t+2).
\end{array}
\end{equation*}
}
\end{rem}

\begin{proof}[Proof of Theorem \ref{MULT}]
In fact we prove a slightly stronger result, i.e. that the system (\ref{multsys}) has infinitely many polynomial solutions with $x=t$. In order to do that let us observe that the first equation from the system (\ref{multsys}) is equivalent with the following one:
\begin{equation}\label{firsteq}
P^2-t(t+1)Y^2=1-t(t+1),
\end{equation}
with $P=2p+1$ and $Y=2y+1$. This equation has infinitely many solutions in polynomials $P,Y\in\mathbb{Z}[t]$. Indeed, the equation (\ref{firsteq}) is satisfied by $P=1, Y=1$ and the related equation $P^2-t(t+1)Y^2=1$ has the solution
$$
P'=2t+1,\quad Y'=2.
$$
As a consequence we see that for each $n\in\mathbb{N}$ the pair $(P_{n},Y_{n})$ of polynomials defined by the recurrence relations
\begin{equation*}
\begin{cases}
\begin{array}{lll}
  P_{0} & = & 1 \\
  Y_{0} & = & 1 \\
  P_{n} & = & (2t+1)P_{n-1}+2t(t+1)Y_{n-1} \\
  Y_{n} & = & 2P_{n-1}+(2t+1)Y_{n-1}
\end{array}
\end{cases}
\end{equation*}
is a solution of the equation (\ref{firsteq}). We thus see that the polynomials
\begin{equation*}
\begin{array}{ll}
  y=y_{n}=\frac{1}{2}(Y_{n}-1) & p=p_{n}=\frac{1}{2}(P_{n}-1) \\
  z=z_{n}=y_{n+1} & r=r_{n}=p_{n+1}
\end{array}
\end{equation*}
satisfy the first and third equation in the system (\ref{multsys}). In order to get the result it is enough to prove that with our choice of $x, y, z$ the second equation in the system (\ref{multsys}) is satisfied too, i.e. $y(y+1)z(z+1)=y_{n}(y_{n}+1)y_{n+1}(y_{n+1}+1)=q(q+1)$ for some $q\in\mathbb{Z}[t]$. This is easy due to the identity
\begin{align*}
f\left(\frac{u-1}{2}\right)&f\left(\frac{2v+(2t+1)u-1}{2}\right)-f\left(\frac{(v-u)((2t^2+4t+1)u+(2t+3)v)}{4(t^2+t-1)}\right)\\
                           &=(v^2-t(t+1)u^2-1+t(t+1))H(u,v),
\end{align*}
where $4(t^2+t-1)H$ is a polynomial in $\mathbb{Z}[u,v,t]$ and $f(x)=x(x+1)$. If we put now $u=Y_{n}, v=P_{n}$ then we have the equalities
$$
y_{n}=\frac{1}{2}(u-1),\quad y_{n+1}=\frac{2v+(2t+1)u-1}{2},
$$
and simple induction reveals that
$$v-u=P_{n}-Y_{n}\equiv 0\pmod{2(t^2+t-1)}\quad\mbox{and}\quad uv=Y_{n}P_{n}\equiv 1\pmod{2}
$$
in the ring $\mathbb{Z}[t]$. As a consequence of our reasoning we see that for each $n\in\mathbb{N}$ the function
$$
q_{n}=\frac{(P_n-Y_n)((2t^2+4t+1)Y_n+(2t+3)P_n)}{4(t^2+t-1)}
$$
is a polynomial in $\mathbb{Z}[t]$ and thus we get the result.
\end{proof}

\begin{proof}[Proof of Theorem \ref{ZC2}]
In the proof we will use the following result of Fujiwara \cite{Fujiwara}.
\begin{lem}\label{Fujiwara}
Put $p(z)=\sum_{i=0}^na_iz^i, a_n\neq 0,$ where $a_i\in\mathbb{R}$ for all $i=0,1,\ldots,n.$ Then
$$
\max\{|\zeta|: p(\zeta)=0\}\leq 2\max\left\{\left|\frac{a_{n-1}}{a_n}\right|,\left|\frac{a_{n-2}}{a_n}\right|^{1/2},\ldots,\left|\frac{a_0}{2a_n}\right|^{1/n}\right\}.
$$
\end{lem}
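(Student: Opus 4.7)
The plan is to prove the contrapositive: if $|z|$ exceeds the right-hand side, then $p(z)\neq 0$. Denote the bound by
$$
M:=2\max\left\{\left|\frac{a_{n-1}}{a_{n}}\right|,\left|\frac{a_{n-2}}{a_{n}}\right|^{1/2},\ldots,\left|\frac{a_{1}}{a_{n}}\right|^{1/(n-1)},\left|\frac{a_{0}}{2a_{n}}\right|^{1/n}\right\},
$$
so that by construction
$$
|a_{n-k}|\leq |a_{n}|\left(\frac{M}{2}\right)^{k}\quad (1\leq k\leq n-1),\qquad |a_{0}|\leq 2|a_{n}|\left(\frac{M}{2}\right)^{n}.
$$
First I would fix an arbitrary $z\in\mathbb{C}$ with $|z|>M$ and use the reverse triangle inequality to write
$$
|p(z)|\;\geq\;|a_{n}|\,|z|^{n}-\sum_{k=1}^{n}|a_{n-k}|\,|z|^{n-k}.
$$

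The key step is to show that the subtracted sum is strictly smaller than $|a_n||z|^n$. Using the estimates above together with $M/2<|z|/2$, each middle term is bounded by $|a_{n}|(|z|/2)^{k}|z|^{n-k}=|a_{n}||z|^{n}/2^{k}$, while the last term is bounded by $2|a_{n}|(|z|/2)^{n}=|a_{n}||z|^{n}/2^{n-1}$. Collecting these estimates gives
$$
\sum_{k=1}^{n}|a_{n-k}|\,|z|^{n-k}\;<\;|a_{n}|\,|z|^{n}\left(\sum_{k=1}^{n-1}\frac{1}{2^{k}}+\frac{1}{2^{n-1}}\right)=|a_{n}|\,|z|^{n},
$$
because the finite geometric sum $\sum_{k=1}^{n-1}2^{-k}=1-2^{-(n-1)}$ is exactly complemented by the extra $2^{-(n-1)}$ coming from the doubled constant-term bound. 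The factor $2$ in the denominator of $|a_{0}/(2a_{n})|^{1/n}$ is what makes the telescoping work cleanly, and identifying this is the main (and essentially only) subtle point in the argument.

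Combining the two displayed inequalities yields $|p(z)|>0$, so $z$ is not a root. Hence every root $\zeta$ of $p$ must satisfy $|\zeta|\leq M$, which is the desired conclusion. Since the estimate was strict, one could in fact obtain $|\zeta|<M$ whenever the maximum in the definition of $M$ is attained; but the stated form $\leq$ is all that is needed in the applications to Theorem \ref{ZC2} and Theorem \ref{Fib}.
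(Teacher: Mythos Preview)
Your argument is correct and is in fact the standard proof of Fujiwara's bound: the reverse triangle inequality followed by the geometric-series estimate, with the extra factor $2$ in the $a_0$-term arranged precisely so that the tail sums to $1$. There is nothing to criticize mathematically.

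Note, however, that the paper does not supply its own proof of this lemma at all: it merely quotes the result and cites Fujiwara's 1916 paper \cite{Fujiwara}. So there is no ``paper's proof'' to compare against; you have simply filled in the classical argument that the authors chose to omit. Your remark that this is used in Theorem~\ref{ZC2} is accurate, but the reference to Theorem~\ref{Fib} is slightly off---there the bounds on the roots are obtained by explicit factorization rather than by invoking Fujiwara's inequality.
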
 We apply Runge's method to determine a bound for the size of integral solutions.
Let $F(x)=(x-b)x(x+b)(x+k-b)(x+k)(x+k+b).$
The polynomial part of the Puiseux expansion of
$$
\left((x-b)x(x+b)(x+k-b)(x+k)(x+k+b)\right)^{1/2}
$$
is
$$P(x)=x^{3} + \frac{3}{2} \, k x^{2} + \left(-b^{2}
+ \frac{3}{8} \, k^{2}\right) x - \frac{1}{2}
\, b^{2} k - \frac{1}{16} \, k^{3}.$$
We have that
\begin{eqnarray*}
&&256F(x)-(16P(x)-1)^2=32 x^{3} + \left(-192 \, b^{2} k^{2} + 12 \,
k^{4} + 48 \, k\right) x^{2}+\\
&&+ \left(-192 \,b^{2} k^{3} + 12 \, k^{5} - 32 \, b^{2} + 12
\, k^{2}\right) x - 64 \, b^{4} k^{2} - 16 \,
b^{2} k^{4} - k^{6} - 16 \, b^{2} k - 2 \,
k^{3} - 1,\\
&&256F(x)-(16P(x)+1)^2=-32 x^{3} + \left(-192 \, b^{2} k^{2} + 12 \,
k^{4} - 48 \, k\right) x^{2}+\\
&& + \left(-192 \,b^{2} k^{3} + 12 \, k^{5} + 32 \, b^{2} - 12
\, k^{2}\right) x - 64 \, b^{4} k^{2} - 16 \,
b^{2} k^{4} - k^{6} + 16 \, b^{2} k + 2 \,
k^{3} - 1.
\end{eqnarray*}
Fujiwara's result implies that all roots of these cubic polynomials satisfy
$|x|\leq\max_{1\leq i\leq 3}B_i,$ where
\begin{eqnarray*}
B_1&=&2\max\left|-6 \, b^{2} k^{2} + \frac{3}{8} \, k^{4} \pm \frac{3}{2} \, k\right|,\\
B_2&=&2\max\left|-6 \, b^{2} k^{3} + \frac{3}{8} \, k^{5} \mp b^{2} \pm \frac{3}{8} \, k^{2}\right|^{1/2},\\
B_3&=&2\max\left|-b^{4} k^{2} - \frac{1}{4} \, b^{2} k^{4} -\frac{1}{64} \, k^{6} \pm \frac{1}{4} \, b^{2}k \pm \frac{1}{32} \, k^{3} - \frac{1}{64}\right|^{1/3}.
\end{eqnarray*}
Therefore if $|x|>\max_{1\leq i\leq 3}B_i,$ then
either
$$
(16P(x)+1)^2<256F(x)=(16y)^2<(16P(x)-1)^2
$$
or
$$
(16P(x)-1)^2<256F(x)=(16y)^2<(16P(x)+1)^2.
$$
Hence $y=\pm P(x).$ It remains to solve the equation $F(x)=P(x)^2.$ It follows that
$$
x=-\frac{48 \, b^{2} k - 3 \, k^{3} \pm 2 \,
{\left(4 \, b^{2} - k^{2}\right)} \sqrt{-48
\, b^{2} + 3 \, k^{2}}}{6 \, {\left(16 \,
b^{2} - k^{2}\right)}}.
$$
\end{proof}
\begin{proof}[Proof of Theorem \ref{Fib}]
We apply Runge's method to determine an upper bound for the size of possible positive integer solutions
of the equation
$$
F(x):=(x-1)x(x+1)(x+k-1)(x+k)(x+k+1)=(z-1)z(z+1),
$$
where $y=x+k$ for some positive integer $k.$
We have that
$$
(x^2+kx-1)^3<F(x)<(x^2+kx)^3
$$
if $x$ is large. In fact, the second inequality is true if $k>1.$ The roots of the polynomial $F(x)-(x^2+kx-1)^3$
are as follows
\begin{eqnarray*}
-\frac{1}{2} \, k - \frac{1}{2} \,\sqrt{3 \, k^{2} + 2 \, k\sqrt{k^{2} + 4} +4}&\approx&-\frac{1}{2} \, k {\left(\sqrt{5} + 1\right)},\\
-\frac{1}{2} \, k - \frac{1}{2} \,\sqrt{3 \, k^{2} - 2 \, k\sqrt{k^{2} + 4} +4}&\approx&-k,\\
-\frac{1}{2} \, k + \frac{1}{2} \,\sqrt{3 \, k^{2} - 2 \, k\sqrt{k^{2} + 4} +4}&\approx&\frac{1}{k^{3}},\\
-\frac{1}{2} \, k + \frac{1}{2} \,\sqrt{3 \, k^{2} + 2 \, k\sqrt{k^{2} + 4} +4}&\approx&\frac{1}{2} \, k {\left(\sqrt{5} - 1\right)}.
\end{eqnarray*}
Therefore if $$x>-\frac{1}{2} \, k + \frac{1}{2} \,\sqrt{3 \, k^{2} + 2 \, k\sqrt{k^{2} + 4} +4},$$
then the first inequality is valid.
Similarly we obtain that
$$
(z-1)^3<(z-1)z(z+1)<(z+1)^3
$$
if $z\notin\{-1,1\}.$
Assume that $x>-\frac{1}{2} \, k + \frac{1}{2} \,\sqrt{3 \, k^{2} + 2 \, k\sqrt{k^{2} + 4} +4}$ and $z\notin\{-1,1\}.$
We obtain that
$$
(x^2+kx-1)^3-(z+1)^3<0<(x^2+kx)^3-(z-1)^3.
$$
It follows that $z=x^2+kx-1$ or $z=x^2+kx.$
If  $z=x^2+kx,$ then $(k^2 + 2kx + 2x^2 - 2)(k + x)x=0$ and we get that either $x=0,x=-k$ or $|k|\leq 2.$
In the latter case $k=1$ or 2 and we obtain overlapping blocks, a contradiction.

If $z=x^2+kx-1,$ then $(k^2 - kx - x^2 + 1)(k + x)x=0$ and we have that $x=0,x=-k$ or
$$
x=-\frac{1}{2}k\pm \frac{1}{2}\sqrt{5k^2+4}.
$$
Since $x$ is a positive integer it follows that $k=F_{2n}.$
It yields that either $x=F_{2n-1}$ or $x=-F_{2n+1}.$ The latter is negative so the only possible
positive solution is $x=F_{2n-1}.$ Since $y=x+k,$ we obtain that $y=F_{2n+1}.$ Thus $(x,y,z)=(F_{2n-1},F_{2n+1},F_n^2)$ provides solutions.
\end{proof}

\begin{proof}[Proof of Corollary \ref{cor1}]
We wrote a Sage \cite{sage} code to determine all integral solution of equation \eqref{z3} with $b=1$
in the interval provided by Theorem \ref{Fib}.
\end{proof}

\section{Some additional remarks and questions}\label{questions}

In this final section we collect some additional remarks and questions which we were unable to answer during our research.
%\bigskip
\begin{ques}
Does the equation (\ref{eq23triangle}) have infinitely many polynomial solutions which are not in the sequence constructed in the proof of Theorem \ref{EGvar}?
\end{ques}
\begin{rem}{\rm
The above question is motivated by the observation of the existence of polynomial solutions of (\ref{eq23triangle}) which are not contained in the family we constructed. Indeed, we have the following solutions:
\begin{equation*}
\begin{array}{lll}
  x=t, & y=t^2+t-1,  &z=(t^2+t-1) (t^2+t+1),\\
  x=t, & y=(2t+1)^2-4, &z=2 t (t+1) (2 t-1) (2 t+3), \\
  x=t, & y=(2t+1)^2, &z=2 t (t+1) (4 t^2+4 t+3), \\
  x=t(8t^3-6t-1), & y=4t^2-3, &z=(2 t-1) (2 t+1) (2 t^2-1) (8 t^3-6 t-1).
\end{array}
\end{equation*}
}\end{rem}
\bigskip

The next question is motivated by the result of Sastry mentioned in \cite[D17]{Guy1994} which says that the Diophantine equation $x(x+1)(x+2)y(y+1)(y+2)=z^2$ has infinitely many solutions in integers satisfying  $y>x+2$. In this direction one can ask the following:

\begin{ques}
Does the equation
$$
z^2=\frac{x(x+1)(x+2)}{y(y+1)(y+2)}
$$
have infinitely many solutions in positive integers satisfying $x\neq y$?
\end{ques}
This seems to be a difficult question. In the range $x<10^7, y<10^5$ we found only 10 solutions given in the Table 2 below:

\begin{center}
\begin{tabular}{|l|l|l|c|l|l|l|}
  \hline
  % after \\: \hline or \cline{col1-col2} \cline{col3-col4} ...
  $x$ & $y$ & $z$    &  & $x$ & $y$ & $z$ \\
  \hline
  2 & 1 & 2    &  & 1680 & 5 & 4756 \\
  14 & 5 & 4   &  & 1680 & 14 & 1189 \\
  26 & 12 & 3  &  & 13454 & 90 & 1798 \\
  48 & 1 & 140 &  & 57120 & 168 & 6214 \\
  48 & 2 & 70  &  & 114242 & 337 & 6214 \\
  \hline
\end{tabular}
\begin{center}Table 2\end{center}
\end{center}

\bigskip

The next question which comes to mind is the following:

\begin{ques}
Does the Diophantine equation
$$
x(x+1)y(y+1)=z^3
$$
have infinitely many solutions in positive integers?
\end{ques}

In the range $x\leq y\leq 10^5$ we found only three solutions:
$$
(x,y,z)=(11,242,198),\; (32, 242, 396),\;(539, 3024, 13860).
$$

We were trying to prove that relatively simpler Diophantine equation
\begin{equation}\label{last}
x(x+1)y(y+1)z(z+1)=t^3
\end{equation}
has infinitely many solutions in positive integers satisfying the condition $x+1<y<z-1$, but we failed.  We find that in the range $x+1<y<z-1<5\cdot 10^3$ our equation has 88 solutions.
The list of solutions can be downloaded from \url{http://math.unideb.hu/media/tengely-szabolcs/xyzt3.pdf}.
This strongly suggests that the following is true:
\begin{conj}
The equation (\ref{last}) has infinitely many solutions in positive integers satisfying the condition $x+1<y<z-1$.
\end{conj}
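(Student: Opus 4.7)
A first observation is that $n(n+1)=2T_{n}$, where $T_{n}=n(n+1)/2$ is the $n$-th triangular number, so (\ref{last}) is equivalent to $T_{x}T_{y}T_{z}=s^{3}$ with $t=2s$: the conjecture asks for infinitely many triples of triangular numbers, with spacing $x+1<y<z-1$, whose product is a perfect cube. This symmetric reformulation is the natural setting in which to look for a parametric family, and it suggests aiming (as in Theorems \ref{23squarethm}, \ref{EGvar} and \ref{MULT}) for an infinite family of polynomial solutions in $\mathbb{Z}[n]$.

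The plan is to adapt the Pell-type technique used in the earlier proofs. There the equation was rewritten in the form $Z^{2}-AX^{2}=B$ with $A,B\in\mathbb{Z}[n]$, and one seed solution was then propagated by the fundamental unit of $\mathbb{Z}[n][\sqrt{A}]$. The cube exponent in (\ref{last}) obstructs a direct reduction of this kind, but the obstruction can perhaps be bypassed by an ansatz collapsing the cube condition onto two of the three factors. For example, seeking solutions of the shape $y(y+1)z(z+1)=x(x+1)\,u^{3}$ turns (\ref{last}) into $(x(x+1))^{2}u^{3}=t^{3}$, i.e.\ $t=x(x+1)u$; the surviving constraint $y(y+1)z(z+1)=x(x+1)\,u^{3}$ is itself a Diophantine surface, and one would then fix $u$ or $x$ as a polynomial in $n$ chosen so that the residual two-variable equation is of Pell type and has a nontrivial seed. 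A complementary tactic is to search directly for low-degree polynomial solutions by imposing the identity $x(n)(x(n)+1)y(n)(y(n)+1)z(n)(z(n)+1)=t(n)^{3}$ in $\mathbb{Z}[n]$; the list of 88 numerical solutions mentioned in the paper gives natural seeds from which the coefficients of such an ansatz may be reverse-engineered.

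A more geometric alternative is to view (\ref{last}) as cutting out a threefold $V\subset\mathbb{A}^{4}$ and to exhibit an elliptic fibration of $V$ carrying a section of infinite order. Fixing $x$ and $y$ produces a smooth cubic $C_{x,y}:t^{3}=x(x+1)y(y+1)z(z+1)$ of genus one in $(z,t)$; letting $y$ vary over $\mathbb{Q}(n)$ for fixed $x$ yields an elliptic surface $S_{x}$. If one can produce a non-torsion section of $S_{x}\to\mathbb{P}^{1}$, Silverman's specialization theorem implies that the Mordell--Weil rank of $C_{x,y}(\mathbb{Q})$ is positive for all but finitely many integer $y$, giving infinitely many rational $z$, from which one would still need to extract integer solutions respecting the spacing condition $x+1<y<z-1$.

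The main obstacle is that (\ref{last}) defines a degree-six hypersurface in $\mathbb{A}^{4}$ which, absent hidden symmetry, is expected to be of general type; by the Bombieri--Lang philosophy its integer points should then lie on a proper algebraic subvariety, and locating that subvariety is precisely the difficulty. Moreover, the cube exponent prevents the quadratic linearization that drove Theorems \ref{23squarethm}--\ref{MULT}; one is pushed into elliptic curves over $\mathbb{Q}(n)$ whose ranks are notoriously hard to control, and the auxiliary polynomial identity of $H(u,v)$-type that made the proof of Theorem \ref{MULT} go through has no obvious cubic analogue. The authors' failure to find an infinite family despite many available numerical solutions is strong evidence that any successful proof will require either a fortuitous new parametric identity in the spirit of Theorem \ref{MULT} or a clever elliptic fibration with a visible non-torsion section.
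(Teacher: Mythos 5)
This statement is a conjecture, not a theorem: the paper offers no proof of it, and the authors explicitly state that they tried and failed to establish it, offering only the list of $88$ numerical solutions in the range $x+1<y<z-1<5\cdot 10^{3}$ as heuristic support. Your submission likewise contains no proof. It is a survey of strategies --- a Pell-type ansatz collapsing the cube onto two factors, a direct search for low-degree polynomial solutions, and an elliptic-fibration argument via Silverman specialization --- but none of these is carried out: no seed identity is exhibited, no non-torsion section is produced, and no parametric family is written down. Each sketched route stops exactly at the point where the real difficulty lies (finding the cubic analogue of the $H(u,v)$ identity from the proof of Theorem \ref{MULT}, or exhibiting a section of infinite order on $S_{x}\to\mathbb{P}^{1}$ and then recovering \emph{integer} points with the required spacing from rational ones). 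So the gap is total: the statement remains unproved both in the paper and in your attempt.

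That said, your framing is sound and consistent with the paper's own viewpoint. The reduction to $T_{x}T_{y}T_{z}=s^{3}$ is correct (with $t=2s$, so one must also track the factor of $2$, i.e. $8\mid t^{3}/(T_xT_yT_z)$ forces $t$ even, which is automatic), and your diagnosis of why the quadratic linearization of Theorems \ref{23squarethm}--\ref{MULT} breaks down for exponent $3$ matches the authors' implicit reasoning for leaving this as a conjecture. If you pursue this, the most promising of your three routes is probably the second: reverse-engineering a polynomial identity from the published solution list, since that is precisely how the seed solutions in the proofs of Theorems \ref{23squarethm} and \ref{EGvar} appear to have been found.
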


\begin{acknowledgement}
We express our gratitude to the anonymous referee for a careful reading of the manuscript and valuable suggestions made.
\end{acknowledgement}

\end{document}